\newtheorem{thm}{Theorem}[section]
\newtheorem{lem}[thm]{Lemma}
\newtheorem{cor}[thm]{Corollary}
\theoremstyle{definition}
\newtheorem{definition}[thm]{Definition}
\newtheorem{notn}[thm]{Notation}
\theoremstyle{remark}
\renewcommand{\epsilon}{\varepsilon}
\newcommand{\bb}[1]{\mathbb{#1}}
\newcommand{\R}[4]{R_{#1}(#2 \to #3 #4)}
\newcommand{\Rstar}[4]{R_{#1}^*(#2 \to #3 #4)}
\DeclareRobustCommand\widecheck[1]{^{{\mathpalette\@widecheck{#1}}}}
\def\@widecheck#1#2{%
    \setbox\z@\hbox{\m@th$#1#2$}%
    \setbox\tw@\hbox{\m@th$#1%
       \widehat{%
          \vrule\@width\z@\@height\ht\z@
          \vrule\@height\z@\@width\wd\z@}$}%
    \dp\tw@-\ht\z@
    \@tempdima\ht\z@ \advance\@tempdima2\ht\tw@ \divide\@tempdima\thr@@
    \setbox\tw@\hbox{%
       \raise\@tempdima\hbox{\scalebox{1}[-1]{\lower\@tempdima\box
\tw@}}}%
    {\ooalign{\box\tw@ \cr \box\z@}}}
\begin{document}

\title[A decoupling proof of the Tomas restriction theorem]{A decoupling proof of the Tomas restriction theorem}
\author{Griffin Pinney}
\address{Mathematical Sciences Institute, Australian National University, Canberra, Australia}
\email{\href{mailto:griffin.pinney@anu.edu.au}{griffin.pinney@anu.edu.au}; \href{mailto:griffinpinney1@gmail.com}{griffinpinney1@gmail.com}}

\keywords{Fourier restriction, Fourier extension, decoupling}
\thanks{I would like to thank my advisor Po-Lam Yung for initiating my interest in the Fourier restriction problem, and for guiding me throughout my research.}

\begin{abstract}
We give a new proof of a classic Fourier restriction theorem for the truncated paraboloid in $\bb{R}^n$ based on the $l^2$ decoupling theorem of Bourgain-Demeter. Focusing on the extension formulation of the restriction problem (dual to the original restriction formulation), we find that the $l^2$ decoupling theorem directly implies a local variant of the desired extension estimate incurring an $\epsilon$-loss. To upgrade this result to the desired global extension estimate, we employ some $\epsilon$-removal techniques first introduced by Tao. By adhering to the extension formulation, we obtain a more natural proof of the required $\epsilon$-removal result.
\end{abstract}
\maketitle

\section{Introduction}

Fix $n\geq 2$ and define the \textit{truncated paraboloid} in $\bb{R}^n$ by
$$P^{n-1}:=\{(\xi,|\xi|^2)\in\bb{R}^n:\;\xi\in[-1,1]^{n-1}\},$$
which we equip with the surface measure $d\sigma$. We define the \textit{extension operator} $E$ on $L^p(P^{n-1},d\sigma)$ by
$$Ef(x) = \int_{P^{n-1}}f(\xi)e^{2\pi ix\cdot\xi}\,d\sigma(\xi),$$
and denote by $\Rstar{}{p}{q}{}$ the statement that $E$ is a bounded operator $L^p(P^{n-1},d\sigma)\to L^q(\bb{R}^n)$. Thus, $\Rstar{}{p}{q}{}$ is equivalent to the \textit{extension estimate}
\begin{equation}
    \|Ef\|_{L^q(\bb{R}^n)} \lesssim \|f\|_{L^p(P^{n-1},\,d\sigma)},\quad\forall f\in L^p(P^{n-1},d\sigma). \label{eq:extensionest}
\end{equation}
The notation $\Rstar{}{p}{q}{}$ is motivated by the equivalence of estimate \eqref{eq:extensionest}, by a standard duality argument, to the corresponding \textit{restriction estimate}
\begin{equation}
    \|\hat{g}|_{P^{n-1}}\|_{L^{p'}(P^{n-1},\,d\sigma)} \lesssim \|g\|_{L^{q'}(\bb{R}^n)},\quad\forall g\in\mathcal{S}(\bb{R}^n), \label{eq:restrictionest}
\end{equation}
which we denote by $\R{}{q'}{p'}{}$. The restriction problem is rooted in estimates of the form \eqref{eq:restrictionest}, and much of the terminology we will employ is therefore phrased in terms of restriction. Despite this, we will continue to focus on extension estimates.

The \textit{restriction conjecture} states that $\Rstar{}{p}{q}{}$ holds for all exponents $p$ and $q$ satisfying
$$\frac{1}{q} < \frac{n-1}{2n},\quad\frac{n+1}{q} \leq \frac{n-1}{p'}.$$
Thus, the conjectured strong-type diagram for $E$ is as follows:

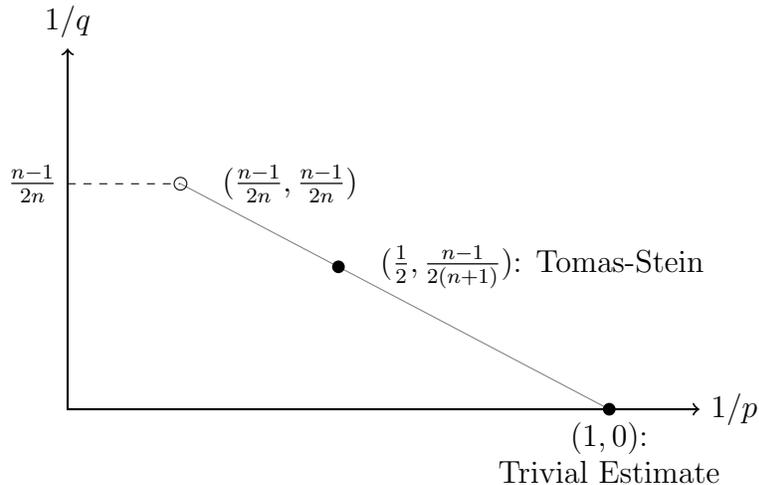
\begin{figure}[H] \label{fig:conjecture}
\begin{tikzpicture}[scale=6]
    \draw [<->,thick] (0,0.8) node (yaxis) [above] {$1/q$}
        |- (1.4,0) node (xaxis) [right] {$1/p$};
    \draw[dashed] (0,1/2) -- (1/4,1/2);
    \draw[gray] (1.2,0) -- (1/4,1/2);
    \fill[black] (1.2,0) node[anchor=north] {\begin{tabular}{c}$(1,0)$: \\ Trivial Estimate \end{tabular}} circle (0.4pt);
    \fill[black] (0.6,6/19) node[anchor=west] {\quad$(\frac{1}{2},\frac{n-1}{2(n+1)})$: Tomas-Stein} circle (0.4pt);
    \draw (1/4,1/2) node[anchor=west] {\quad$(\frac{n-1}{2n},\frac{n-1}{2n})$} circle (0.4pt);
    \draw(0,1/2) node[anchor=east] {$\frac{n-1}{2n}$};
\end{tikzpicture}
\caption{The conjectured strong-type diagram for the extension operator.}
\centering
\end{figure}

Early progress on the restriction conjecture was made by Tomas \cite{Tomas1}, using interpolation methods to prove $\Rstar{}{2}{q}{}$ for all $q>\frac{2(n+1)}{n-1}$, a result which we will refer to as the \textit{Tomas restriction theorem}. This result of Tomas was improved by Stein in the same year to include the endpoint $q=\frac{2(n+1)}{n-1}$, which we therefore refer to as the \textit{Tomas-Stein exponent}. This improvement of Stein was first published by Tomas \cite{Tomas2}, and a proof may also be found in Chapter IX, Section $2.1$ of \cite{Stein}. Further developments, proving restriction theorems of Tomas-Stein type for more general measures, have since been made in \cite{Mockenhaupt,Mitsis,BS}; recent progress on the restriction conjecture itself has been made in \cite{Guth1, Guth2, HR, Wang, HZ}. We will present a new proof of the Tomas restriction theorem based on the recent \textit{$l^2$ decoupling theorem} of Bourgain-Demeter \cite{BD}. We note that the idea of using decoupling to prove restriction results is not entirely new; an application of decoupling to a discrete restriction theorem is also given in \cite{BD}.

Fix $C\geq 1$. For each $\xi\in\bb{R}^{n-1}$ and each $\delta>0$, let $P_{\xi,\delta}\subset\bb{R}^n$ denote the region
$$P_{\xi,\delta} = \{(\xi',\xi_n) : \xi'\in \xi+(-\delta,\delta)^{n-1}\,;\,|\xi_n-|\xi|^2-2\xi\cdot(\xi'-\xi)| < C\delta^2\}.$$
This can be thought of as the hyperplane tangent to the paraboloid at $(\xi,|\xi|^2)$, thickened to scale $\sim\delta^2$ in the $\xi_n$ direction and lying above a cube of scale $\sim\delta$ about $\xi$. The $l^2$ decoupling theorem of Bourgain and Demeter may be reformulated as follows:

\begin{thm}[The $l^2$ decoupling theorem: Theorem $1.1$, \cite{BD}]\label{thm:decforparaboloid}
Let $0<\delta\leq 1$, and let $\Sigma\subset[-1,1]^{n-1}$ be $\delta$-separated. If for each $\xi\in\Sigma$, $f_{\xi}\in\mathcal{S}(\bb{R}^n)$ has Fourier support in $P_{\xi,\delta}$, then
$$\Big\|\sum_{\xi\in\Sigma}f_{\xi}\Big\|_{L^{\frac{2(n+1)}{n-1}}(\bb{R}^n)} \lesssim_{\epsilon} \delta^{-\epsilon}\Big(\sum_{\xi\in\Sigma}\|f_{\xi}\|_{L^{\frac{2(n+1)}{n-1}}(\bb{R}^n)}^2\Big)^{1/2}$$
for all $\epsilon>0$.
\end{thm}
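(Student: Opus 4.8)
The plan is to deduce the stated inequality from the $l^2$ decoupling theorem exactly as proved by Bourgain and Demeter in \cite{BD}, rather than to reproduce their argument (which rests on multilinear Kakeya estimates together with a Bourgain--Guth style induction on scales, and would constitute a paper in its own right). In the form established in \cite{BD}, the theorem asserts that if $g\in\mathcal{S}(\bb{R}^n)$ has Fourier support in the $\delta^2$-neighbourhood $\mathcal{N}_{\delta^2}(P^{n-1})$, and $g=\sum_{q}g_q$ is the decomposition obtained by partitioning $[-1,1]^{n-1}$ into a tiling $\mathcal{Q}$ of cubes $q$ of side $\delta$ and localising the frequency support of $g$ to the canonical slab lying over $q$, then $\|g\|_{L^{2(n+1)/(n-1)}(\bb{R}^n)}\lesssim_\epsilon\delta^{-\epsilon}\big(\sum_{q}\|g_q\|_{L^{2(n+1)/(n-1)}(\bb{R}^n)}^2\big)^{1/2}$. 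Two gaps separate this from the present statement: the sets $P_{\xi,\delta}$ are flat tangential slabs, carrying the extra constant $C$, rather than curved pieces of $\mathcal{N}_{\delta^2}(P^{n-1})$; and $\Sigma$ is an arbitrary $\delta$-separated set rather than the set of centres of a tiling.

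To close the first gap I would Taylor expand: since $|\xi'|^2=|\xi|^2+2\xi\cdot(\xi'-\xi)+|\xi'-\xi|^2$, the affine function in the definition of $P_{\xi,\delta}$ is exactly the tangent plane to $P^{n-1}$ at $(\xi,|\xi|^2)$, which over $\xi+(-\delta,\delta)^{n-1}$ deviates from the paraboloid by at most $(n-1)\delta^2$; hence $P_{\xi,\delta}\subset\mathcal{N}_{(C+n-1)\delta^2}(P^{n-1})$ while projecting onto a set of diameter $2\delta$ in the $\xi'$-variables. Consequently $P_{\xi,\delta}$ is covered by $O_{C,n}(1)$ of the canonical slabs, each enlarged by a dilation factor $O_{C,n}(1)$, and the passage from the canonical statement to this constant-dilated-slab version is the routine device of refining the scale by a constant, re-applying the theorem, and undoing a parabolic rescaling, at a cost of $O_{C,n}(1)$ that is absorbed into the implied constant.

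To close the second gap, fix a tiling $\mathcal{Q}$ of a neighbourhood of $[-1,1]^{n-1}$ by cubes of side $\delta$; since $\Sigma$ is $\delta$-separated, every $q\in\mathcal{Q}$ contains at most $K=K(n)$ points of $\Sigma$. Put $F_q:=\sum_{\xi\in\Sigma\cap q}f_\xi$, so that $\widehat{F_q}$ is supported in $O_{C,n}(1)$ canonical slabs near $q$ by the previous paragraph, and $\sum_q F_q=\sum_{\xi\in\Sigma}f_\xi$. Applying decoupling in its constant-dilated-slab form gives $\big\|\sum_{\xi\in\Sigma}f_\xi\big\|_{L^{2(n+1)/(n-1)}}\lesssim_\epsilon\delta^{-\epsilon}\big(\sum_q\|F_q\|_{L^{2(n+1)/(n-1)}}^2\big)^{1/2}$, and then the triangle inequality followed by Cauchy--Schwarz gives $\|F_q\|_{L^{2(n+1)/(n-1)}}\le\sum_{\xi\in\Sigma\cap q}\|f_\xi\|_{L^{2(n+1)/(n-1)}}\le K^{1/2}\big(\sum_{\xi\in\Sigma\cap q}\|f_\xi\|_{L^{2(n+1)/(n-1)}}^2\big)^{1/2}$, so that $\sum_q\|F_q\|_{L^{2(n+1)/(n-1)}}^2\le K\sum_{\xi\in\Sigma}\|f_\xi\|_{L^{2(n+1)/(n-1)}}^2$ and the claim follows with an implied constant absorbing $K$ and the slab-dilation factors.

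The main obstacle here is not analytic but \emph{organisational}: all of the genuine difficulty resides in the cited theorem of \cite{BD}, and the only point demanding real care is the geometry of the first gap --- checking that the flat tangential slabs $P_{\xi,\delta}$ at $\delta$-close points all lie inside a common $O_{C,n}(1)$-dilate of a single canonical slab, and that the canonical hypotheses can be reinstated by constant rescalings while keeping track of how the constant $C$ propagates. Everything beyond that is bounded overlap, the triangle inequality, and Cauchy--Schwarz.
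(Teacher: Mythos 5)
Your proposal is correct and takes essentially the same route as the paper, which offers no proof of this statement at all: it simply cites Theorem 1.1 of Bourgain--Demeter and presents the flat-slab, $\delta$-separated formulation as a reformulation of it. The details you supply --- covering each $P_{\xi,\delta}$ by $O_{C,n}(1)$ constant-dilated canonical caps via the identity $|\xi'|^2=|\xi|^2+2\xi\cdot(\xi'-\xi)+|\xi'-\xi|^2$, adjusting the scale by a constant via parabolic rescaling, and reducing an arbitrary $\delta$-separated $\Sigma$ to a $\delta$-tiling using bounded multiplicity plus Cauchy--Schwarz --- are exactly the standard transference steps that the paper leaves implicit in the word ``reformulated''.
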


Note that the decoupling exponent $\frac{2(n+1)}{n-1}$ afforded to us by Theorem \ref{thm:decforparaboloid} is the same as the Tomas-Stein exponent. This is the primary fact we will exploit to give a new proof of the Tomas restriction theorem based on decoupling.

Inherent to the decoupling theorem is the $\epsilon$-loss in the factor $\delta^{-\epsilon}$. Consequently, we may only hope to to use the decoupling theorem to directly prove a version of the extension estimate $\Rstar{}{p}{q}{}$ which also incurs an $\epsilon$-loss. We therefore introduce a ``local" variant of extension estimates.

\begin{definition}
Let $\epsilon>0$. If we have
\begin{equation}
    \|Ef\|_{L^{q}(B(0,R))} \lesssim R^{\epsilon}\|f\|_{L^{p}(P^{n-1},\,d\sigma)} \label{eq:R_S^*(p to q,epsilon)}
\end{equation}
for all $R\geq 1$ and all $f\in L^{p}(P^{n-1},d\sigma)$, we say that $\Rstar{}{p}{q}{\,;\,\epsilon}$ holds.
\end{definition}

There is a general principle known as \textit{$\epsilon$-removal} which states that the local extension estimates $\Rstar{}{p}{q}{\,;\,\epsilon}$ for all $\epsilon>0$ should imply a global extension estimate in which the $\epsilon$-loss is transferred to one or both of the exponents $p,q$. This idea was first explored by Tao \cite{TaoBR}, and an adaptation of Tao's methods yields the following concrete result to be explored in the final section:

\begin{thm}[$\epsilon$-removal] \label{thm:epsilonremoval}
The local extension estimates $\Rstar{}{2}{\frac{2(n+1)}{n-1}}{\,;\,\epsilon}$ for all $\epsilon>0$ imply $\Rstar{}{2}{q}{}$ for all $q>\frac{2(n+1)}{n-1}$.
\end{thm}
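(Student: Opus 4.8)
The plan is to run Tao's $\epsilon$-removal argument in the extension formulation. First I would fix $q > \frac{2(n+1)}{n-1}$ and choose an auxiliary exponent, call it $q_0 = \frac{2(n+1)}{n-1}$, the Tomas--Stein exponent, together with a small parameter $\epsilon > 0$ to be specified in terms of the gap $q - q_0$. The key observation is that it suffices, by a standard density and duality reduction, to prove a uniform bound $\|Ef\|_{L^q(\bb{R}^n)} \lesssim \|f\|_{L^2(d\sigma)}$; by real interpolation (or by Littlewood--Paley/dyadic pigeonholing on the level sets of $Ef$) this in turn follows from a restricted strong-type bound, so I may assume $f = \mathbf{1}_\Omega$ for a measurable set $\Omega \subseteq P^{n-1}$ and aim to estimate $|\{x : |Ef(x)| > \lambda\}|$ favourably in $\lambda$.

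The heart of the matter is to upgrade the local estimate $\Rstar{}{2}{q_0}{\,;\,\epsilon}$ to a global one at a slightly worse exponent, and here I would follow Tao's two-step scheme. Step one: a \emph{sparse-support} or \emph{well-separated} lemma --- if $\Omega$ is supported on a union of $\sim N$ caps that are pairwise $\gg R^{C\epsilon}$-separated (at the relevant scale), then one can beat the trivial bound by exploiting the rapid decay of the kernel $\widehat{d\sigma}$ away from the support, gaining a power of $R^{-\epsilon}$ that cancels the loss in \eqref{eq:R_S^*(p to q,epsilon)}; this is where adhering to the extension formulation should make the geometry cleaner, since the relevant object is the autocorrelation of the extension operator rather than a Fourier restriction. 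Step two: decompose an arbitrary $\Omega$ into $O((\log R)^{C})$ pieces each of which is either sparse in the above sense or else concentrated on few caps, handle the sparse pieces by step one and the concentrated pieces by the trivial embedding $L^2(d\sigma) \hookrightarrow L^1(d\sigma)$ combined with the $L^\infty$ bound, and sum. Finally I would feed this into a Bochner--Riesz-type summation over dyadic scales $R \sim 2^j$: the local estimate at scale $R$ with an $R^\epsilon$ loss, summed geometrically against the gain from enlarging $q_0$ to $q$, converges because $\sum_j 2^{j\epsilon} 2^{-j\delta(q)} < \infty$ once $\epsilon < \delta(q)$; this produces the global estimate $\Rstar{}{2}{q}{}$.

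The main obstacle I expect is step one --- quantifying the gain from separation of the caps with the right power of $R$. One must balance three competing scales: the ball radius $R$, the separation $\rho$ of the caps, and the number $N$ of caps; the decay $|\widehat{d\sigma}(x)| \lesssim |x|^{-(n-1)/2}$ gives a gain only when $\rho$ is large compared to $R^{\text{something}}$, so the decomposition in step two must be calibrated so that the "sparse" pieces genuinely meet this threshold while the "concentrated" pieces are few enough to absorb. Getting the bookkeeping so that the losses at every stage (the $R^\epsilon$ from the local estimate, the $(\log R)^C$ from the decomposition, the finitely many interpolation losses) all fit under the available room $q - q_0$ is the delicate part; everything else is either a routine duality/interpolation reduction or a geometric series.
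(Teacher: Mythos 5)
Your outline misses the mechanism that actually makes $\epsilon$-removal work, and the step you lean on instead would fail. The decisive gap is your final ``Bochner--Riesz-type summation over dyadic scales $R\sim 2^j$'': you claim the $R^{\epsilon}$ loss from $\Rstar{}{2}{\frac{2(n+1)}{n-1}}{\,;\,\epsilon}$ can be summed geometrically ``against the gain from enlarging $q_0$ to $q$,'' but there is no such gain. For $q>q_0$, H\"older on a ball or annulus of radius $R$ goes the wrong way (it costs a positive power of $|B(0,R)|$), and $\|Ef\|_{L^q(|x|\sim 2^j)}$ has no uniform-in-$\|f\|_{L^2}$ decay in $j$; the mass of $Ef$ can live at distance comparable to $2^j$. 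So $\sum_j 2^{j\epsilon}2^{-j\delta(q)}$ has no source for the factor $2^{-j\delta(q)}$, and the passage from local to global estimates is exactly where the naive argument is known to break down. The proof in the paper (following Tao) avoids this entirely: one proves a weak-type bound $\|Ef\|_{L^{q_0,\infty}(\bb{R}^n)}\lesssim\|f\|_{L^2(P^{n-1},d\sigma)}$ with $q_0$ slightly above $\tfrac{2(n+1)}{n-1}$, by mollifying $|Ef|$ with $\chi_{Q_{1/4}}$ so that each superlevel set is comparable to a union $A_t$ of unit cubes, covering $A_t$ by $O(N|A_t|^{1/N})$ \emph{sparse collections of balls of radius} $O(|A_t|^{2^N})$ (Lemma \ref{lem:epsilonremoval.2}), applying the local estimate on sparse unions of balls (Lemma \ref{lem:epsilonremoval.1}) so the loss is only $|A_t|^{\epsilon q 2^N}$, and then choosing $N\sim\log(1/\epsilon q)$ so that $\epsilon q2^N+1/N<1$, which allows the powers of $|A_t|$ to be absorbed and the weak-type bound to be rearranged out. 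Note the scale at which the local estimate is invoked is the tower-exponential quantity $|A_t|^{2^N}$, dictated by the size of the superlevel set, not a dyadic sequence of scales; the optimization in $N$ is precisely what converts the $\epsilon$-loss into an arbitrarily small loss in the exponent, and none of this appears in your sketch.

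A second, related confusion: you place the sparseness on the frequency side (caps of $\Omega\subset P^{n-1}$ separated by $R^{C\epsilon}$) and hope to gain from the decay of $\widehat{d\sigma}$ ``away from the support.'' The operative notion in the argument is sparseness in \emph{physical} space: the balls $B(x_i,R)$ on which $Ef$ is measured must satisfy $|x_i-x_j|\gtrsim (RN)^{2/(n-1)}$, and the decay $|\widehat{d\sigma}(x)|\lesssim|x|^{-(n-1)/2}$ enters (in the proof of Lemma \ref{lem:epsilonremoval.1}) to control the $TT^*$-interactions between pieces supported on far-apart balls. Separated caps on the paraboloid give orthogonality gains at $L^2$, not at $L^{q_0}$, and a ``sparse caps versus concentrated caps'' decomposition of $\Omega$ with trivial bounds for the concentrated part does not produce the required estimate. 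Your initial reduction to restricted strong type (via Lorentz-space interpolation, legitimate since the range $q>\tfrac{2(n+1)}{n-1}$ is open) is fine but immaterial; the paper instead keeps general $f$ with $\|f\|_{L^2}=1$ and works with superlevel sets of $|Ef|*\chi_{Q_{1/4}}$, which is what makes the covering lemma applicable.
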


Our goal is therefore to use the $l^2$ decoupling theorem to prove the local extension estimates $\Rstar{}{2}{\frac{2(n+1)}{n-1}}{\,;\,\epsilon}$ for all $\epsilon>0$. Do to so, it is helpful to reformulate the local extension estimates, so we will use the following well-known equivalent condition for $\Rstar{}{p}{q}{\,;\,\epsilon}$. In what follows, we let $\mathcal{N}_{\delta}$ denote the $\delta$-neighbourhood of $P^{n-1}$ for any $\delta>0$.

\begin{lem} \label{lem:equivcondition}
Let $1\leq p,q\leq\infty$ and $\epsilon>0$. Then, $\Rstar{}{p}{q}{\,;\,\epsilon}$ holds if and only if for all $R\geq 1$ and all $g\in\mathcal{S}(\bb{R}^n)$ with Fourier support in $\mathcal{N}_{R^{-1}}$, we have $\|g\|_{L^q(B(0,R))}\lesssim R^{\epsilon-1/p'}\|\hat{g}\|_{L^p(\bb{R}^n)}$.
\end{lem}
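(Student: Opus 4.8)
The plan is to prove both implications by a duality/rescaling argument that converts between the extension operator on the paraboloid and functions with Fourier support in the thin neighbourhood $\mathcal{N}_{R^{-1}}$.

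First I would set up the dictionary between the two formulations. The key observation is that the surface measure $d\sigma$ on $P^{n-1}$ and the Lebesgue measure on $\mathcal{N}_{R^{-1}}$ are related by a ``vertical averaging'' at scale $R^{-1}$: parametrising $\mathcal{N}_{R^{-1}}$ by $(\xi, |\xi|^2 + t)$ with $\xi \in [-1,1]^{n-1}$ and $|t| \lesssim R^{-1}$ (up to the harmless distortion caused by the curvature, which only affects constants), the Lebesgue measure on this set is comparable to $d\sigma(\xi)\,dt$. Concretely, I would show that if $g$ has Fourier support in $\mathcal{N}_{R^{-1}}$, then writing $\hat{g}$ in these coordinates, one has $g(x) = \int_{|t|\lesssim R^{-1}} e^{2\pi i x_n t}\, E(\hat g(\cdot, \cdot + t))(x)\, dt$ up to constants, so that $g$ is essentially an average of modulated extension operators applied to ``slices'' of $\hat g$. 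This lets one transfer bounds in both directions.

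For the forward direction (assuming $\Rstar{}{p}{q}{\,;\,\epsilon}$), I would take $g$ with $\hat g$ supported in $\mathcal{N}_{R^{-1}}$, apply Minkowski's inequality in $t$ after expressing $g$ as the average above, bound each $\|E(\hat g(\cdot,\cdot+t))\|_{L^q(B(0,R))} \lesssim R^\epsilon \|\hat g(\cdot,\cdot+t)\|_{L^p(d\sigma)}$, and then integrate in $t$ over an interval of length $\sim R^{-1}$, picking up a factor of $R^{-1}$ from the length of the interval and using Hölder in $t$ if $p > 1$; combined with Fubini this produces $\|\hat g\|_{L^p(\mathcal{N}_{R^{-1}})} \lesssim \|\hat g\|_{L^p(\bb{R}^n)}$ and the correct power $R^{\epsilon - 1/p'}$ once one tracks the exponents ($R^{-1}$ from the $t$-integral together with a $R^{1/p}$ from Hölder in $t$ gives $R^{-1/p'}$). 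For the converse, given the neighbourhood estimate and an $f \in L^p(d\sigma)$, I would fix $R$, choose a bump function $\psi$ adapted to $B(0,R)$ whose Fourier transform is concentrated at scale $R^{-1}$, and set $g = \psi \cdot Ef$; then $\hat g$ is supported in $\mathcal{N}_{CR^{-1}}$ (a slight thickening, absorbed by the constant $C$ in the definition of $P_{\xi,\delta}$ and by rescaling $R$), so the hypothesis gives $\|Ef\|_{L^q(B(0,R))} \lesssim \|g\|_{L^q(B(0,R))} \lesssim R^{\epsilon - 1/p'}\|\hat g\|_{L^p}$, and it remains to bound $\|\hat g\|_{L^p} = \|\widehat{\psi} * \widehat{Ef}\|_{L^p}$ by $R^{1/p'}\|f\|_{L^p(d\sigma)}$, which follows because $\widehat{Ef} = f\,d\sigma$ is a measure on the surface, convolution with $\widehat\psi$ smears it over the $R^{-1}$-neighbourhood with $L^\infty$-density $\sim R$ times the surface density, and an $L^p$ computation over a set of thickness $R^{-1}$ yields the factor $R \cdot R^{-1/p} = R^{1/p'}$.

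The main obstacle I anticipate is bookkeeping the exponents and the various $R^{-1}$-versus-$CR^{-1}$ scale mismatches cleanly — in particular making precise the claim that $\widehat{Ef} = f\,d\sigma$ convolved with an $L^1$-normalised bump at scale $R^{-1}$ has $L^p$ norm $\sim R^{1/p'}\|f\|_{L^p(d\sigma)}$, which requires knowing that the convolution does not concentrate but genuinely spreads over a set of measure $\sim R^{-1}$; this is where the geometry of the paraboloid (that it is a graph of a smooth function, so its $R^{-1}$-neighbourhood really does have thickness $\sim R^{-1}$ transversally) enters. The endpoint cases $p = 1$ and $p = \infty$ need to be checked separately but present no real difficulty. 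I would also remark that both directions are entirely soft — no curvature/decoupling input is used here, only the graph structure — which is why this is stated as a ``well-known'' equivalence.
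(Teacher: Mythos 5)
The paper does not actually prove this lemma---it is invoked as ``well-known''---so there is no in-paper argument to compare against; your proposal is the standard proof and is essentially correct: vertical foliation of $\mathcal{N}_{R^{-1}}$ into translated copies of the paraboloid plus Minkowski and H\"older in the fibre variable for the forward direction, and for the converse multiplication by a bump $\psi$ with $|\psi|\gtrsim 1$ on $B(0,R)$ and $\operatorname{supp}\hat\psi\subset B(0,R^{-1})$, so that $\hat g=\hat\psi*(f\,d\sigma)$ is supported in $\mathcal{N}_{R^{-1}}$ and $\|\hat g\|_{L^p(\bb{R}^n)}\lesssim R^{1/p'}\|f\|_{L^p(P^{n-1},d\sigma)}$ (cleanest by interpolating the trivial $L^1$ and $L^\infty$ bounds). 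The only points to tighten are routine: the bump's Fourier transform must be genuinely compactly supported at scale $R^{-1}$ (not merely ``concentrated''), the slight horizontal overshoot of $\mathcal{N}_{R^{-1}}$ beyond $[-1,1]^{n-1}$ should be handled by a translation/covering argument rather than attributed to curvature distortion, and a density step is needed since the hypothesis is stated only for Schwartz $g$ while the conclusion concerns all $f\in L^p(P^{n-1},d\sigma)$.
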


\section{A decoupling proof of the Tomas restriction theorem}

In light of Theorem \ref{thm:epsilonremoval} and Lemma \ref{lem:equivcondition}, to prove the Tomas Restriction theorem, it is sufficient to prove that for all $R\geq 1$ and all $g\in\mathcal{S}(\bb{R}^n)$ with Fourier support in $\mathcal{N}_{R^{-1}}$, we have
\begin{equation}
    \|g\|_{L^{\frac{2(n+1)}{n-1}}(B(0,R))} \lesssim_{\epsilon} R^{\epsilon-1/2}\|\hat{g}\|_{L^2(\bb{R}^n)} \label{eq:suff}
\end{equation}
for all $\epsilon>0$. In addition to the decoupling theorem, the main ingredient will be a variant of \textit{Bernstein's inequality} specific to the regions $P_{\xi,\delta}$.

\begin{thm} \label{thm:Bernstein}
Let $\xi\in\bb{R}^{n-1}$ and $\delta>0$, and suppose $g\in\mathcal{S}(\bb{R}^n)$ has Fourier support in $P_{\xi,\delta}$. Then, for all $1\leq p\leq q\leq\infty$, we have
\begin{equation}
    \|g\|_{L^q(\bb{R}^n)} \lesssim_{p,q} |P_{\xi,\delta}|^{1/p-1/q}\|g\|_{L^p(\bb{R}^n)}. \label{eq:Bernstein}
\end{equation}
\end{thm}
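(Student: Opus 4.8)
The plan is to reduce Bernstein's inequality on the slab $P_{\xi,\delta}$ to the standard Bernstein inequality on a rectangular box by means of an affine change of variables, then invoke (or quickly reprove) the classical fact that a Schwartz function with Fourier support in a box of volume $V$ satisfies $\|g\|_{L^q} \lesssim_{p,q} V^{1/p-1/q}\|g\|_{L^p}$ for $1\leq p\leq q\leq\infty$. First I would observe that $P_{\xi,\delta}$ is contained in (and comparable to) an affine image of the unit-measure-normalised box $Q_\delta := (-\delta,\delta)^{n-1}\times(-C\delta^2,C\delta^2)$; indeed, writing a point of $P_{\xi,\delta}$ as $(\xi',\xi_n)$ with $\xi' = \xi + \eta'$ and $\xi_n = |\xi|^2 + 2\xi\cdot\eta' + \eta_n$, the map $(\eta',\eta_n)\mapsto(\xi',\xi_n)$ is a shear composed with a translation — an affine map of determinant $1$ — carrying $Q_\delta$ onto $P_{\xi,\delta}$. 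In particular $|P_{\xi,\delta}| = |Q_\delta| \sim \delta^{n+1}$.

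Next I would transfer the estimate under this affine map. If $A\colon \bb{R}^n\to\bb{R}^n$ is the linear shear and $b\in\bb{R}^n$ the translation vector, so that $P_{\xi,\delta} = A Q_\delta + b$, then setting $h(x) := e^{-2\pi i b\cdot x} g((A^{-T})x)$ (with $A^{-T}$ the inverse transpose) produces a Schwartz function whose Fourier transform is a translate–dilate of $\hat g$, supported in $Q_\delta$; moreover all $L^p$ norms of $h$ equal those of $g$ up to the Jacobian factor $|\det A|^{-1/p} = 1$, since $|\det A| = 1$. Applying the box version of Bernstein to $h$ and undoing the substitution yields
\begin{equation}
    \|g\|_{L^q(\bb{R}^n)} \lesssim_{p,q} |Q_\delta|^{1/p-1/q}\|g\|_{L^p(\bb{R}^n)} = |P_{\xi,\delta}|^{1/p-1/q}\|g\|_{L^p(\bb{R}^n)},
\end{equation}
which is exactly \eqref{eq:Bernstein}.

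It remains to prove the box case. Here I would pick a Schwartz bump $\psi$ on $\bb{R}^n$ with $\widehat{\psi} \equiv 1$ on the unit box $(-1,1)^n$, and let $\psi_\delta$ be the appropriate anisotropic rescaling so that $\widehat{\psi_\delta} \equiv 1$ on $Q_\delta$; then $g = g * \psi_\delta$. By Young's convolution inequality with exponents chosen so that $1 + 1/q = 1/p + 1/r$, this gives $\|g\|_{L^q} \leq \|g\|_{L^p}\|\psi_\delta\|_{L^r}$, and a scaling computation shows $\|\psi_\delta\|_{L^r} \sim_{p,q} |Q_\delta|^{1/r - 1} \cdot |Q_\delta| = |Q_\delta|^{1/p - 1/q}$ — more carefully, $\|\psi_\delta\|_{L^r} = |Q_\delta|^{-1/r'}\|\psi\|_{L^r}$ and $1/r' = 1/p - 1/q$, so $\|g\|_{L^q}\lesssim_{p,q}|Q_\delta|^{1/p-1/q}\|g\|_{L^p}$ as desired. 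The only mild subtlety — and the step most likely to need care rather than cleverness — is bookkeeping the anisotropic scaling of $\psi_\delta$ and confirming that the shear has determinant exactly $1$ so that no extra powers of $\delta$ sneak in; the geometric reduction itself is otherwise routine. Alternatively, one can avoid the shear entirely by noting $P_{\xi,\delta}$ is contained in an axis-parallel box of dimensions $\sim\delta\times\cdots\times\delta\times\delta^2$ wait — the $\xi_n$-extent over the full cube is $\sim\delta\cdot|\xi|$ which is not $\sim\delta^2$, so the shear (or at least the slab structure) is genuinely needed; this is worth flagging as the one place the argument is not completely trivial.
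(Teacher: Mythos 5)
Your proof is correct and follows essentially the same route as the paper's: an affine change of variables (a determinant-one shear plus translation) carrying $P_{\xi,\delta}$ to a box of volume $\sim\delta^{n+1}$, a reproducing kernel whose Fourier transform is $\equiv 1$ on that box, and Young's convolution inequality, with the Jacobian bookkeeping producing the factor $|P_{\xi,\delta}|^{1/p-1/q}$. The only blemish is the sign in your intermediate scaling line: with $\psi_\delta=|\det D|\,\psi(D\cdot)$ one has $\|\psi_\delta\|_{L^r}=|\det D|^{1/r'}\|\psi\|_{L^r}\sim|Q_\delta|^{1/r'}\|\psi\|_{L^r}$ (positive exponent $1/r'$), which, since $1/r'=1/p-1/q$, yields exactly the bound you state, so the argument stands as written once that typo is fixed.
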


The proof of this version of Bernstein's inequality is a simple adaptation of the proof of Bernstein's inequality for discs: one produces a family of invertible affine transformations $\{T_{\xi,\delta}:\: \xi\in\bb{R}^{n-1},\;\delta>0\}$ such that each $T_{\xi,\delta}$ maps $P_{\xi,\delta}$ to the cube $[-1,1]^n$. Fixing $\varphi\in\mathcal{S}(\bb{R}^n)$ such that $\hat{\varphi}\equiv1$ on $[-1,1]^n$, we see that if $g\in\mathcal{S}(\bb{R}^n)$ has Fourier support in $P_{\xi,\delta}$, then $g=g*(\hat{\varphi}\circ T_{\xi,\delta})\,\widecheck{}\,$, from which Young's convolution inequality implies the result.

We are now ready to give a new proof of the Tomas restriction theorem based on decoupling.

\begin{proof}[Proof of the Tomas restriction theorem.]
Let $R\geq 1$, and suppose $g\in\mathcal{S}(\bb{R}^n)$ has Fourier support in $\mathcal{N}_{R^{-1}}$. Let $\Sigma_R=R^{-1/2}\bb{Z}^{n-1}\cap[-1,1]^{n-1}$, and note that $\mathcal{N}_{R^{-1}}$ is covered by the collection $\mathcal{P}_R=\{P_{\xi,R^{-1/2}} :\; \xi\in\Sigma_R\}$ provided the constant $C>0$ in the definition of the regions $P_{\xi,\delta}$ was chosen large enough to begin with. Let $(\eta_{P})_{P\in\mathcal{P}_R}$ be a partition of unity subordinate to this cover, and for each $P\in\mathcal{P}_{R}$, let $g_{P}=(\hat{g}\eta_{P})\,\widecheck{}\,$. Each $g_{P}$ is Schwartz as the inverse Fourier transform of a Schwartz function, and by Fourier inversion, $\widehat{g_{P}}=\hat{g}\eta_{P}$ is supported in $P$. It follows from the property $\sum_{P\in\mathcal{P}_R}\eta_{P}=1$ that $\sum_{P\in\mathcal{P}_R}\widehat{g_{P}}=\hat{g}$ and hence, upon Fourier inversion, we have $\sum_{P\in\mathcal{P}_R}g_{P}=g$. The $l^2$ decoupling theorem therefore gives
\begin{equation}
    \|g\|_{L^{\frac{2(n+1)}{n-1}}(\bb{R}^n)} = \Big\|\sum_{P\in\mathcal{P}_R}g_{P}\Big\|_{L^{\frac{2(n+1)}{n-1}}(\bb{R}^n)} \lesssim_{\epsilon} R^{\epsilon}\Big(\sum_{P\in\mathcal{P}_R}\|g_{P}\|_{L^{\frac{2(n+1)}{n-1}}(\bb{R}^n)}^2\Big)^{1/2} \label{eq:localtomas.1}
\end{equation}
for all $\epsilon>0$. Since $\widehat{g_{P}}$ is supported in $P$, Theorem \ref{thm:Bernstein} and Plancherel's theorem give
\begin{align*}
    \|g_{P}\|_{L^{\frac{2(n+1)}{n-1}}(\bb{R}^n)} & \lesssim |P|^{\frac{1}{2}-\frac{n-1}{2(n+1)}}\|g_{P}\|_{L^2(\bb{R}^n)}\\
    & \lesssim (R^{-(n+1)/2})^{\frac{1}{2}-\frac{n-1}{2(n+1)}}\|\widehat{g_{P}}\|_{L^2(\bb{R}^n)}\\
    & = R^{-1/2}\|\widehat{g_{P}}\|_{L^2(\bb{R}^n)},
\end{align*}
from which \eqref{eq:localtomas.1} gives
\begin{align}
    \|g\|_{L^{\frac{2(n+1)}{n-1}}(\bb{R}^n)} & \lesssim_{\epsilon} R^{\epsilon-1/2}\Big(\sum_{P\in\mathcal{P}_R}\|\widehat{g_{P}}\|_{L^2(\bb{R}^n)}^2\Big)^{1/2} \label{eq:localtomas.2}
\end{align}
for all $\epsilon>0$. Now, each point in $\bb{R}^n$ lies in at most $O(1)$ of the regions $P$, and it follows that $\sum_{P\in\mathcal{P}_R}|\eta_{P}|^2=O(1)$. We therefore have
\begin{align*}
    \sum_{P\in\mathcal{P}_R}\|\widehat{g_{P}}\|_{L^2(\bb{R}^n)}^2 = \sum_{P\in\mathcal{P}_R}\int_{\bb{R}^n}|\hat{g}\eta_{P}|^2\,dx & = \int_{\bb{R}^n}|\hat{g}|^2\Big(\sum_{P\in\mathcal{P}_R}|\eta_{P}|^2\Big)\,dx\\
    & \lesssim \|\hat{g}\|_{L^2(\bb{R}^n)}^2,
\end{align*}
which together with \eqref{eq:localtomas.2} gives
\begin{align*}
    \|g\|_{L^{\frac{2(n+1)}{n-1}}(\bb{R}^n)} & \lesssim_{\epsilon} R^{\epsilon-1/2} \|\hat{g}\|_{L^2(\bb{R}^n)}.
\end{align*}
In particular, inequality \eqref{eq:suff} holds, from which the Tomas restriction theorem follows.
\end{proof}

\section{\texorpdfstring{$\epsilon$}{Epsilon}-removal}

We now outline a new approach to proving Theorem \ref{thm:epsilonremoval}. In fact, we will prove the following result for a more general range of exponents, yielding Theorem \ref{thm:epsilonremoval} as a simple corollary:

\begin{thm}\label{thm:epsilonremoval.1}
Given $2\leq p \leq q\leq\infty$ and $\epsilon>0$ sufficiently small, $\Rstar{}{p}{q}{\,;\,\epsilon}$ implies $\Rstar{}{p}{r}{}$ whenever
$$\frac{1}{r} < \frac{1}{q_0} : =  \frac{1}{q}-\frac{4\log 2}{q\log(1/\epsilon q)}.$$
\end{thm}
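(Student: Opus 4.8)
The plan is to follow Tao's $\epsilon$-removal strategy, adapted to the extension formulation. The starting point is to reduce the global estimate $\Rstar{}{p}{r}{}$ to a uniform bound on $\|Ef\|_{L^r(\Omega)}$ where $\Omega$ is a union of unit cubes; by a standard argument (and Hölder in the $q<r$ direction on each cube) it suffices to bound $\|Ef\|_{L^q(\Omega)}$ whenever $\Omega=\bigcup_{j=1}^N B_j$ is a union of $N$ unit balls that are (say) $\rho$-separated for some large $\rho$, with the crucial feature that the bound must gain a small negative power of $N$ — something like $\|Ef\|_{L^q(\Omega)}\lesssim N^{-\theta}\cdot(\text{something})$, or equivalently must be essentially independent of $N$ and $\rho$ after summing a geometric series over dyadic scales of $\rho$. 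Concretely, I would aim to prove: for $\|f\|_{L^p}=1$, one has $\|Ef\|_{L^q(\Omega)} \lesssim_\epsilon (N^\epsilon + (\text{const})\,\rho^{-\alpha})$-type bounds that can be optimized in $\rho$.

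The two inputs are: (i) the local estimate $\Rstar{}{p}{q}{\,;\,\epsilon}$, which on a single ball $B(0,R)$ gives $\|Ef\|_{L^q(B(0,R))}\lesssim R^\epsilon\|f\|_{L^p}$; and (ii) a decay estimate for the kernel. For (ii), one uses that $Ef = f\,d\sigma \,\widecheck{\phantom{x}}$ convolved appropriately, or more directly that for well-separated balls one can exploit the stationary-phase decay of $\widecheck{d\sigma}$: if $\Omega$ is a union of $N$ balls that are $\rho$-separated, then a bilinear/orthogonality computation (expanding $\|Ef\|_{L^2}^2$ on $\Omega$, or an interpolation against an $L^2$-based estimate) shows the ``cross terms'' between distinct balls decay like a power of $\rho$, since $|\widecheck{d\sigma}(x)|\lesssim |x|^{-(n-1)/2}$. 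The mechanism is then: cover $B(0,R)$ for a huge $R$ by running the local estimate at a much smaller scale $\rho$ on each of $\sim N$ pieces and paying only $\rho^\epsilon$ per piece instead of $R^\epsilon$; the separation decay controls the interaction and the number of scales one must sum is $O(\log(1/\epsilon)/\text{something})$, which is exactly where the quantitative shape of $q_0$ — with its $\log(1/\epsilon q)$ in the denominator — emerges. One iterates/bootstraps this: feeding the improved exponent back in, or summing the geometric series in $\rho$, upgrades the local estimate at scale $R$ with loss $R^\epsilon$ to a genuinely global estimate at the slightly worse exponent $r<q_0$.

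The main obstacle I expect is the bookkeeping in the iteration: making the separation parameter $\rho$, the number of balls $N$, the ambient scale $R$, and the number of dyadic steps interact so that the $\rho^\epsilon$ (or $N^\epsilon$) losses are defeated by the polynomial gain from kernel decay, and tracking how the exponent degrades at each step so that the final exponent is precisely the stated $q_0$ rather than something weaker. A secondary technical point is handling the passage from ``arbitrary $f$ supported on $P^{n-1}$'' to the discretized/separated-ball model: one must be careful that restricting to a union of cubes and then to well-separated cubes loses nothing of substance, which is the usual ``sparsification'' lemma in this circle of ideas (replace $\Omega$ by a maximal $\rho$-separated subfamily at the cost of a factor depending on $\rho$ and $n$, absorbed into the geometric sum). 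The extension-side formulation should make step (ii) cleaner than in Tao's original restriction-side argument, since the decay of $\widecheck{d\sigma}$ is directly available rather than having to be extracted dually — this is presumably the ``more natural proof'' the abstract advertises. Once the single-step inequality of the form
\begin{equation*}
\|Ef\|_{L^q(\Omega)} \lesssim_\epsilon \rho^{\epsilon}\,\|f\|_{L^p} + \rho^{-\beta}\,N^{1/q}\,\|f\|_{L^p}
\end{equation*}
(schematically) is in hand, optimizing $\rho$ as a function of $N$ and summing over scales yields the claimed $q_0$, and Theorem \ref{thm:epsilonremoval} follows by taking $p=2$, $q=\frac{2(n+1)}{n-1}$ and letting $\epsilon\to 0$ so that $q_0\to q$, covering every $r>\frac{2(n+1)}{n-1}$.
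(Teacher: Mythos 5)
Your proposal correctly locates the argument in Tao's $\epsilon$-removal circle of ideas (local estimates bootstrapped to separated families of balls, with the stationary-phase decay of the inverse Fourier transform of $d\sigma$ controlling interactions), but as written it is a plan whose hard steps are exactly the ones left open, and the architecture you sketch is not the one that produces $q_0$. The paper's proof does not run a dyadic iteration in a separation parameter $\rho$ with a single-step inequality of the schematic form $\rho^{\epsilon}+\rho^{-\beta}N^{1/q}$. Instead it (i) reduces $\Rstar{}{p}{r}{}$ for $1/r<1/q_0$ to the weak-type bound $\|Ef\|_{L^{q_0,\infty}(\bb{R}^n)}\lesssim\|f\|_{L^p(P^{n-1},\,d\sigma)}$, which interpolates against $\Rstar{}{p}{\infty}{}$; (ii) mollifies, using that $\widehat{f\,d\sigma}$ has compactly supported Fourier transform structure to write $Ef=Ef*\chi_{Q_{1/4}}*\check{\psi}$, so that it suffices to control superlevel sets of $|Ef|*\chi_{Q_{1/4}}$, which are genuinely comparable to unions of unit cubes; and (iii) invokes two quantitative lemmas of Tao: a union $A$ of unit cubes can be covered by $O(N|A|^{1/N})$ \emph{sparse} collections of balls of radius $O(|A|^{2^N})$ (sparse meaning centers separated by $\gtrsim(RN)^{2/(n-1)}$ for balls of radius $R$), and $\Rstar{}{p}{q}{\,;\,\epsilon}$ upgrades to such sparse collections with the same $R^{\epsilon}$ loss. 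Chebyshev on each sparse family then yields $|A_t|\lesssim t^{-q}N|A_t|^{\epsilon q2^{N}+1/N}$, and the exponent $q_0$ falls out of the single choice $N=\log(1/\epsilon q)/(2\log 2)$, not from summing a geometric series over scales.

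The concrete gaps: your opening reduction (H\"older on unit cubes to pass from $L^r$ to $L^q$ on $\rho$-separated \emph{unit} balls, with a gain $N^{-\theta}$) is neither justified nor the reduction that works --- the balls on which the local estimate must be applied are enormous (radius $|A_t|^{2^N}$, doubly exponential in the number of covering families), and the separation required is the sparseness $\gtrsim(RN)^{2/(n-1)}$ tied to that radius, which a unit-ball model cannot capture; your picture of ``running the local estimate at a much smaller scale $\rho$ on each piece'' inverts the actual mechanism. You never state, let alone supply, the covering lemma converting a union of unit cubes into few sparse families, and that lemma is precisely where the $\log(1/\epsilon q)$ in $q_0$ originates; asserting that ``optimizing $\rho$ and summing over scales yields the claimed $q_0$'' does not substitute for this computation. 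You also omit the steps that make the level-set argument legitimate: the convolution trick producing a function whose superlevel sets are unions of unit cubes, the $L^{\infty}$ bound (from $\Rstar{}{p}{\infty}{}$) disposing of large $t$, and the finiteness of $|A_t|$ needed to rearrange the final inequality. In short, you have correctly identified the flavour of the ingredients, but the quantitative skeleton that yields the stated $q_0$ is missing.
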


Observe that if the weak-type bound
\begin{equation}
    \|Ef\|_{L^{q_0,\infty}(\bb{R}^n)} \lesssim \|f\|_{L^{p}(P^{n-1},\,d\sigma)},\quad\forall f\in L^p(P^{n-1},d\sigma) \label{eq:weaktypereduction}
\end{equation}
is known, then Theorem \ref{thm:epsilonremoval.1} follows by interpolation with the known estimate $\Rstar{}{p}{\infty}{}$. Our goal is therefore to prove the weak-type estimate \eqref{eq:weaktypereduction}. It is via this reduction that we obtain a more natural proof of Theorem \ref{thm:epsilonremoval.1} than that given for the original $\epsilon$-removal result of Tao (Theorem $1.2$, \cite{TaoBR}). The argument of Tao was phrased in terms of the restriction formulation, in which case equation \eqref{eq:weaktypereduction} was replaced by a dual Lorentz space estimate of the form
\begin{equation}
    \|\hat{g}|_{P^{n-1}}\|_{L^{p'}(P^{n-1},\,d\sigma)} \lesssim
    \|g\|_{L^{{q_0}',1}(\bb{R}^n)},\quad\forall g\in\mathcal{S}(\bb{R}^n).
\end{equation}
To prove this estimate, Tao sketched some further reductions of a highly technical nature which are quite laborious when fully elaborated. By adhering to the extension formulation, we obtain a more natural proof which we can be more easily elaborated.

Though we adopt a different formulation to Tao, our proof will use the same key components. In particular, we borrow the following definition and key lemmas from \cite{TaoBR} (with minor modifications):

\begin{definition}
A collection $\{B(x_i,R)\}_{i=1}^{N}$ of $R$-balls is said to be \textit{sparse} if $|x_j-x_i|\gtrsim (RN)^{\frac{2}{n-1}}$ for all $i\neq j$.
\end{definition}

The first key lemma allows us to bootstrap the local extension estimate $\|E f\|_{L^{q}(B(0,R))}\lesssim R^{\epsilon}\|f\|_{L^{p}(P^{n-1},\,d\sigma)}$
to an analogous estimate for multiple $R$-balls $\bigcup_{i}B(x_i,R)$, provided the collection of balls is sparse.

\begin{lem}[Lemma $3.2$, \cite{TaoBR}] \label{lem:epsilonremoval.1}
Let $2\leq p\leq q\leq\infty$ and $\epsilon>0$, and suppose that $\Rstar{}{p}{q}{\,;\,\epsilon}$ holds. Then for all $R\geq 1$ and all $f\in L^p(P^{n-1},d\sigma)$, we have
\begin{equation}
    \|Ef\|_{L^{q}(\bigcup_{i=1}^{N}B(x_i,R))} \lesssim R^{\epsilon} \|f\|_{L^p(P^{n-1},\,d\sigma)}
\end{equation}
whenever $\{B(x_i,R)\}_{i=1}^{N}$ is a sparse collection of $R$-balls.
\end{lem}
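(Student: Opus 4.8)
The plan is to prove Lemma~\ref{lem:epsilonremoval.1} by reducing the multi-ball estimate to the single-ball hypothesis $\Rstar{}{p}{q}{\,;\,\epsilon}$ together with an almost-orthogonality argument that exploits the sparseness of $\{B(x_i,R)\}_{i=1}^N$. First I would write $\|Ef\|_{L^q(\bigcup_i B(x_i,R))}^q = \sum_{i=1}^N \|Ef\|_{L^q(B(x_i,R))}^q$. Each term can be handled by the hypothesis after translating: since $E(f e^{2\pi i x_i \cdot (\cdot)})(x) = Ef(x+x_i)$ and translation does not change the $L^p(d\sigma)$ norm of the (modulated) density, we get $\|Ef\|_{L^q(B(x_i,R))} \lesssim R^\epsilon \|f\|_{L^p(d\sigma)}$ for each $i$. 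Summing these $N$ copies naively, however, loses a factor $N^{1/q}$, which is exactly what sparseness must be used to recover. So the real content is an interpolation between the trivial $q=\infty$ bound and an $L^2$-type bound for the sum over the balls.

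The heart of the argument is therefore the $L^2$ endpoint. I would bound $\sum_i \|Ef\|_{L^2(B(x_i,R))}^2$ by inserting a Schwartz weight $w_R$ adapted to $B(0,R)$ (with $w_R \gtrsim 1$ on $B(0,R)$, rapidly decaying, and $\widehat{w_R}$ supported in $B(0,R^{-1})$, e.g.\ $w_R(x) = \varphi(x/R)$ for suitable $\varphi$), writing
\begin{equation}
    \sum_{i=1}^N \int_{B(x_i,R)} |Ef|^2 \lesssim \int_{\bb{R}^n} |Ef(x)|^2 \Big(\sum_{i=1}^N w_R(x-x_i)\Big)\,dx.
\end{equation}
Expanding $Ef$ and $\overline{Ef}$ as integrals over $P^{n-1}$ and integrating in $x$ first produces a kernel $K(\xi,\eta) = \sum_i \widehat{w_R}(\eta - \xi)e^{2\pi i x_i\cdot(\eta-\xi)}$ on $P^{n-1}\times P^{n-1}$. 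Because $\widehat{w_R}$ is supported in a ball of radius $R^{-1}$, the diagonal term ($i=i$ contributions, or more precisely the part where $|\xi-\eta|\lesssim R^{-1}$ on the surface) contributes $\lesssim N \|f\|_{L^2(d\sigma)}^2$ after a Schur-test / Cauchy--Schwarz estimate using $\|\widehat{w_R}\|_\infty \lesssim R^n$ and the measure of an $R^{-1}$-cap being $\sim R^{-(n-1)/2}$, so $R^n \cdot R^{-(n-1)/2} = R^{(n+1)/2}$; one then checks this against the allowed loss. The off-diagonal terms require $|\xi-\eta|\lesssim R^{-1}$ yet $\xi,\eta$ lying in $R$-balls whose centres are $(RN)^{2/(n-1)}$-separated — here I would invoke the fact that $\widehat{w_R}$, being essentially supported in $B(0,R^{-1})$ and Schwartz, has rapidly decaying tails, so each pair $x_i\ne x_j$ contributes a factor decaying faster than any power of $|x_i-x_j|/R \gtrsim N^{2/(n-1)}$; summing the $N^2$ pairs then still gives $\lesssim N\|f\|_{L^2(d\sigma)}^2$ (indeed $o(1)$ relative to it), which is the point of the sparseness exponent $\tfrac{2}{n-1}$.

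Combining, I obtain $\|Ef\|_{L^2(\bigcup_i B(x_i,R))} \lesssim N^{1/2} \cdot C_R \|f\|_{L^2(d\sigma)}$ where $C_R$ absorbs an $R^{O(1)}$ factor, and from the hypothesis (or simply from $\|Ef\|_\infty \le \|f\|_{L^1(d\sigma)}\lesssim \|f\|_{L^p(d\sigma)}$) an $L^\infty$ bound on $\bigcup_i B(x_i,R)$. Interpolating the $L^2$ and $L^\infty$ estimates at exponent $q$ gives a bound with a factor $N^{1/q}$ times powers of $R$; the trade-off is that the $L^\infty$ side is $R^\epsilon$-clean while the $L^2$ side carries the $R^{O(1)}$ and $N^{1/2}$ losses, so the interpolation must be arranged (choosing how much weight to put on each endpoint, or equivalently using the single-ball hypothesis at $q$ directly on each ball and reserving the $L^2$ almost-orthogonality only to kill the $N^{1/q}$) so that the surviving power of $R$ is at most $R^\epsilon$ and all powers of $N$ cancel. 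This balancing is where the precise sparseness exponent and the exponent range $p\ge 2$ (needed so $\|f\|_{L^2(d\sigma)}\lesssim\|f\|_{L^p(d\sigma)}$, since $d\sigma$ is finite) enter, and I expect the bookkeeping of these powers — making sure the $N$-dependence genuinely disappears rather than merely being controlled — to be the main obstacle; the almost-orthogonality itself is standard once the weight $w_R$ is chosen correctly.
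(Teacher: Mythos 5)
The paper does not actually prove this lemma; it is imported verbatim from Tao \cite{TaoBR} (Lemma 3.2), so the benchmark is the argument there: one dualizes, splits $g=\sum_i g_i$ over the widely separated balls, handles the diagonal terms with the single-ball hypothesis $\Rstar{}{p}{q}{\,;\,\epsilon}$ (translation invariance plus disjointness of supports supplying the summation in $i$ at no loss), and handles the cross terms $i\neq j$ using the stationary-phase decay $|\widehat{d\sigma}(x)|\lesssim |x|^{-(n-1)/2}$; the sparseness exponent $\tfrac{2}{n-1}$ is chosen precisely so that this \emph{polynomial} decay, evaluated at $|x_i-x_j|\gtrsim (RN)^{2/(n-1)}$, beats the number of pairs and the crude powers of $R$. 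Your sketch does not contain this mechanism. In your $L^2$ step the kernel is $K(\xi,\eta)=\widehat{w_R}(\eta-\xi)\sum_i e^{2\pi i x_i\cdot(\eta-\xi)}$: it involves a \emph{single} sum over $i$, so there are no pairs $x_i\neq x_j$ to which your claimed "faster than any power of $|x_i-x_j|/R$" decay could apply, and in any case $\widehat{w_R}$ is evaluated at the frequency difference $\eta-\xi$, not at $x_i-x_j$. Pairs only appear after squaring (a $TT^*$ expansion), and then the decay available in $|x_i-x_j|$ comes from $\widehat{d\sigma}$ and is only of order $(n-1)/2$ --- this is exactly where the exponent $\tfrac{2}{n-1}$ enters, whereas you attribute the gain to Schwartz tails of the weight, which is not where it comes from. (Minor: $w_R=\varphi(\cdot/R)$ with compact support cannot also have $\widehat{w_R}$ supported in $B(0,R^{-1})$.)

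More fundamentally, the estimate you aim for, $\sum_i\int_{B(x_i,R)}|Ef|^2\lesssim N R^{O(1)}\|f\|_{L^2(d\sigma)}^2$, holds ball-by-ball with no sparseness at all (each ball contributes $\lesssim R\|f\|_{L^2(d\sigma)}^2$), so it cannot be the source of the improvement; and your final interpolation with $L^\infty$ still carries the factor $N^{1/q}$ together with unwanted powers of $R$. You acknowledge this and defer it to "bookkeeping," but making \emph{every} power of $N$ disappear is not bookkeeping --- it is the entire content of the lemma, and no step in your outline accomplishes it. As written, the argument either loses $N^{1/q}$ (using the hypothesis on each ball separately) or loses $R^{O(1)}$ (using the $L^2$ bound), and no arrangement of the interpolation removes both. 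To close the proof you need a step in which the separation $(RN)^{2/(n-1)}$ genuinely enters --- e.g.\ the dual decomposition $g=\sum_i g_i$ with cross terms controlled by $\widehat{d\sigma}(x_i-x_j)$, so that the off-diagonal contribution is $O(1)$ relative to the diagonal and the $N$-dependence cancels identically.
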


The second key lemma allows us to cover a set which is a union of unit cubes by a reasonably small number of sparse collections of balls of sufficiently small radius.

\begin{lem}[Lemma $3.3$, \cite{TaoBR}] \label{lem:epsilonremoval.2}
Let $A$ be a union of unit cubes. For any $N\geq 1$, $A$ may be covered by $O(N|A|^{1/N})$ sparse collections of balls of radius $O(|A|^{2^N})$.
\end{lem}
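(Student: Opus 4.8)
The statement is purely combinatorial and geometric; the extension operator plays no role. The plan is to reduce to a covering problem for a finite family of unit cubes and then induct on $N$. As a preliminary reduction I would enlarge $A$ slightly so that it becomes a disjoint union of $m$ closed unit cubes with integer vertices, so that $m\asymp|A|$; what makes the statement nontrivial is that these $m$ cubes may be scattered over a region whose diameter is not controlled by $m$ at all, and we must nevertheless cover them by few sparse families of balls whose radii are only \emph{polynomial} in $m$. The base case $N=1$ is immediate: each of the $m$ unit cubes, viewed as a one-element collection of $1$-balls, is vacuously sparse, and enlarging each such ball to radius $|A|^{2}$ keeps it a singleton and hence sparse, so $A$ is covered by $m=O(|A|)$ sparse collections of balls of radius $O(|A|^{2})$.

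For the inductive step $N\to N+1$, I would fix a scale $L$ comparable to the level-$N$ radius $|A|^{2^{N}}$ and tile $\mathbb{R}^{n}$ by a grid of cubes of side length $L$. Partitioning the grid into $O(1)$ colour classes according to the residues of the grid coordinates modulo a large dimensional constant (losing only a factor $O(1)$ in the number of sparse families), I may assume the occupied $L$-cubes under consideration are pairwise separated by $\gtrsim L$. One then splits into cases according to how the $m$ unit cubes of $A$ distribute among these $L$-cubes. When the cubes of $A$ cluster into few $L$-cubes, each such $L$-cube $Q$ has diameter $O(L)$ and contains at most $m_{Q}$ of them with $\sum_{Q}m_{Q}\asymp m$; I apply the inductive hypothesis inside each $Q$ to cover $A\cap Q$ by $\lesssim m_{Q}^{1/N}$ sparse families of balls of radius $\lesssim m_{Q}^{2^{N}}\le L$ lying in a fixed dilate of $Q$, and then \emph{merge across cubes}: ordering the families of each cube as $1,2,\dots$, the union over all occupied $Q$ of the $j$-th family is again sparse, since inside each $Q$ the balls are already sparse with radius $\le L$ while balls from distinct cubes are $\gtrsim L$ apart, and by the choice of $L$ this separation dominates the required threshold (which involves the total count of balls, at most $m$, and the common radius, at most $L$); this is precisely what forces the admissible radius to grow from $|A|^{2^{N}}$ to $|A|^{2^{N+1}}=|A|^{2\cdot2^{N}}$. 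When instead the cubes of $A$ are spread among many $L$-cubes, I regard the set of occupied $L$-cubes, rescaled by $L^{-1}$, as a fresh instance of the problem with at most $|A|$ unit cubes, apply the inductive hypothesis to it, and rescale back (which for $n\ge3$ does not destroy sparsity). In either branch, combining the $O(1)$ colour classes, the $N$ levels already used lower down, and the bound $\lesssim\max_{Q}m_{Q}^{1/N}\lesssim|A|^{1/(N+1)}$ on the number of merged families yields $O((N+1)|A|^{1/(N+1)})$ sparse families of balls of radius $O(|A|^{2^{N+1}})$, completing the induction.

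The main obstacle is the parameter bookkeeping inside this inductive step: one must calibrate the tower of scales and the clustered/spread dichotomy so that, simultaneously, the radii stay bounded by $O(|A|^{2^{N}})$, the total number of sparse families stays $O(N|A|^{1/N})$, and every merged family is genuinely sparse (in particular, so that no $L$-cube is ``too heavy'' in the clustered branch and the occupied cell set is ``genuinely smaller'' in the spread branch). This balancing, rather than any individual inequality, is the technical heart of the lemma, and is essentially the argument of Tao \cite{TaoBR} that we are importing. A secondary, dimension-sensitive point is that dilating a sparse family need not preserve sparsity when $n=2$, so the low-dimensional cases of the merge step require a little additional care.
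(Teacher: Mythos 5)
The first thing to say is that the paper does not prove this lemma at all: it is imported verbatim (with minor modifications) from Tao, and the only ``proof'' in the paper is the citation to [TaoBR, Lemma 3.3]. Your proposal, by your own admission, ultimately defers ``the technical heart'' to that same source, so as a self-contained argument it is a plan rather than a proof; that alone would be consistent with how the paper treats the lemma, but the plan as written has a concrete gap in its central step.

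The gap is in the merging step of your induction. Sparsity is a property of the whole collection, not of the individual balls: a family of $k$ balls of common radius $R$ is sparse only if its centers are $\gtrsim (Rk)^{2/(n-1)}$-separated, and this threshold \emph{grows with $k$}. When you take the union over the occupied $L$-cubes $Q$ of the $j$-th family produced inside each $Q$ by the induction hypothesis, pairs of balls lying in the \emph{same} $Q$ are only guaranteed separation $\gtrsim(\rho_Q k_Q)^{2/(n-1)}$, where $\rho_Q\lesssim m_Q^{2^N}$ and $k_Q\le m_Q$ refer to that cube alone; after merging, the relevant threshold becomes $\gtrsim (R\,k_{\mathrm{tot}})^{2/(n-1)}$ with $k_{\mathrm{tot}}$ as large as $m\sim|A|$ and $R$ the common (possibly enlarged) radius. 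Your parenthetical shows you know the threshold involves the total count $m$, but you only verify the \emph{cross-cube} separation $\gtrsim L$ against it; the within-cube separations can be far below the new threshold, so the merged family need not be sparse. Fixing this is not a matter of calibrating $L$: one must either propagate a strictly stronger hypothesis in which sparsity is measured against the global quantity $|A|$ rather than each subfamily's own cardinality, or genuinely reduce the count by replacing whole clusters of small balls with single balls of the larger radius $O(|A|^{2^N})$ --- which is the real mechanism by which the radius gets squared at each stage, and is closer to how Tao's argument proceeds. Finally, the rescaling step in your ``spread'' branch fails for $n=2$ exactly as you note (dilation by $\lambda\ge1$ scales separations by $\lambda$ but the threshold by $\lambda^{2/(n-1)}$), and since the paper needs the lemma for all $n\ge2$, that case would have to be handled rather than set aside.
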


In the forthcoming proof, we will have frequent need to consider various superlevel sets, so we introduce some notation here for convenience:

\begin{notn}
Let $f:\bb{R}^n\to\bb{C}$ be measurable. Given $t>0$, let $\eta_f(t)$ denote the superlevel set
\begin{equation}
    \eta_f(t) = \{x\in \bb{R}^n : |f(x)|\geq t\}.
\end{equation}
\end{notn}

Given $r>0$, we will also denote by $Q_r$ the cube $[-r,r]^n$.

\begin{proof}[Proof of Theorem \ref{thm:epsilonremoval.1}.]
Let $2\leq p\leq q\leq\infty$, $\epsilon>0$, and suppose $\Rstar{}{p}{q}{\,;\,\epsilon}$ holds. Recall that it suffices to prove the estimate \eqref{eq:weaktypereduction}.

Let $f\in L^p(P^{n-1},d\sigma)$, and assume without loss of generality (by scale invariance) that $\|f\|_{L^p(P^{n-1},\,d\sigma)}=1$. We begin by bounding the left-hand side of \eqref{eq:weaktypereduction} by the $L^{q_0,\infty}(\bb{R}^n)$ norm of the average $|Ef|*\chi_{Q_{1/4}}$, motivated by the heuristic that the superlevel sets of $|Ef|*\chi_{Q_{1/4}}$ resemble unions of unit cubes, so Lemma \ref{lem:epsilonremoval.2} will become applicable.

By Fubini's theorem, we may compute
\begin{equation}
    \widehat{\chi_{Q_{1/4}}}(\xi) = \prod_{i=1}^{n}\frac{\sin(\pi\xi_i/2)}{\pi \xi_i},
\end{equation}
so $\widehat{\chi_{Q_{1/4}}}\sim 1$ on $Q_{3/2}$. Let $\varphi\in C^{\infty}_c(Q_{3/2})$ be a bump function with $\varphi\equiv1$ on $Q_1$, and define
\begin{equation}
    \psi : = \begin{cases}
    \varphi/\widehat{\chi_{Q_{1/4}}} & \text{on }Q_{3/2}\,;\\
    0 & \text{on }\bb{R}^n\setminus Q_{3/2}.
    \end{cases}
\end{equation}
Since $\varphi\equiv 1$ on $P^{n-1}\subset Q_1$, Fubini's theorem gives $Ef=Ef*\check{\varphi}$. But clearly, $\varphi=\widehat{\chi_{Q_{1/4}}}\psi$, from which we see by Fourier inversion that $\check{\varphi}=\chi_{Q_{1/4}}*\check{\psi}$, hence $Ef=Ef*\chi_{Q_{1/4}}*\check{\psi}$. Young's convolution inequality for weak $L^p$ spaces therefore gives $\|Ef\|_{L^{q_0,\infty}(\bb{R}^n)} \lesssim \||Ef|*\chi_{Q_{1/4}}\|_{L^{q_0,\infty}(\bb{R}^n)}$, so by our normalisation, it suffices to prove $\||Ef|*\chi_{Q_{1/4}}\|_{L^{q_0,\infty}(\bb{R}^n)} \lesssim 1$. That is, it suffices to prove
\begin{equation}
    |\eta_{|Ef|*\chi_{Q_{1/4}}}(t)|^{1/q_0} \lesssim \frac{1}{t},\quad\forall t>0. \label{eq:weaktypereduction.1}
\end{equation}
We first note that \eqref{eq:weaktypereduction.1} is clear for all $t$ larger than a particular threshold independent of $f$. Indeed, Young's convolution inequality followed by the known estimate $\Rstar{}{p}{\infty}{}$ and our normalisation $\|f\|_{L^p(P^{n-1},\,d\sigma)} = 1$ gives
\begin{equation}
    \||Ef|*\chi_{Q_{1/4}}\|_{L^{\infty}(\bb{R}^n)} \leq \|Ef\|_{L^{\infty}(\bb{R}^n)}\|\chi_{Q_{1/4}}\|_{L^1(\bb{R}^n)} \lesssim 1.
\end{equation}
It follows that $\||Ef|*\chi_{Q_{1/4}}\|_{L^{\infty}(\bb{R}^n)}\leq K$ for some constant $K$ independent of $f$, from which we see that $|\eta_{|Ef|*\chi_{Q_{1/4}}}(t)|=0$ for $t>K$. Equation \eqref{eq:weaktypereduction.1} therefore clearly holds for $t>K$, so we may assume without loss of generality that $0<t\leq K$.

Given $t>0$, let $A_t$ be the union of all unit cubes of the form $z+Q_{1/2}$ for some $z\in\bb{Z}^n$ which have nonempty intersection with the superlevel set $\eta_{|Ef|*\chi_{Q_{1/4}}}(t)$. Clearly, $\eta_{|Ef|*\chi_{Q_{1/4}}}(t)\subset A_t$, hence $|\eta_{|Ef|*\chi_{Q_{1/4}}}(t)|^{1/q_0}\leq|A_t|^{1/q_0}$; it therefore suffices to prove
\begin{equation}
    |A_t|^{1/q_0} \lesssim \frac{1}{t},\quad\forall 0<t\leq K.  \label{eq:weaktypereduction.2}
\end{equation}
Since $A_t$ is a union of unit cubes, Lemma \ref{lem:epsilonremoval.2} allows us to cover $A_t$ by sparse collections $\mathcal{C}_1,\cdots,\mathcal{C}_m$ of balls of radius $O(|A_t|^{2^N})$ for any $N$, where $m=O(N|A_t|^{1/N})$. Since these collections cover $A_t$, we have
\begin{equation}
    A_t = \bigcup_{i=1}^{m}\Big(A_t\cap\bigcup_{B\in\mathcal{C}_i}B\Big),
\end{equation}
hence,
\begin{equation}
    |A_t| \leq \sum_{i=1}^{m}\Big|A_t\cap\bigcup_{B\in\mathcal{C}_i}B\Big|. \label{eq:epsilonremoval.1}
\end{equation}
If $x\in A_t$ then by definition, there exists $z\in\bb{Z}^n$ and $y\in\bb{R}^n$ such that $x,y\in z+Q_{1/2}$ and $|Ef|*\chi_{Q_{1/4}}(y)\geq t$. It follows that $|x-y|\leq\sqrt{n}$ and hence, $|Ef|*\chi_{Q_{1/4+\sqrt{n}}}(x)\geq|Ef|*\chi_{Q_{1/4}}(y)\geq t$. We therefore have $A_t\subset\eta_{|Ef|*\chi_{Q_{1/4+\sqrt{n}}}}(t)$, and in particular, equation \eqref{eq:epsilonremoval.1} gives
\begin{align}
    |A_t| \leq \sum_{i=1}^{m}\Big|A_t\cap\bigcup_{B\in\mathcal{C}_i}B\Big| & \leq \sum_{i=1}^{m}\Big|\eta_{|Ef|*\chi_{Q_{1/4+\sqrt{n}}}}(t)\cap\bigcup_{B\in\mathcal{C}_i}B\Big|\\
    & = \sum_{i=1}^{m}|\eta_{(|Ef|*\chi_{Q_{1/4+\sqrt{n}}})\chi_{\bigcup_{B\in\mathcal{C}_i}B}}(t)|\\
    & \leq \frac{1}{t^{q}}\sum_{i=1}^{m}\||Ef|*\chi_{Q_{1/4+\sqrt{n}}}\|_{L^{q}(\bigcup_{B\in\mathcal{C}_i}B)}^{q}, \label{eq:epsilonremoval.2}
\end{align}
where the last line follows by Chebyshev's inequality. But for all $x\in\bigcup_{B\in\mathcal{C}_i}B$, we have
\begin{equation}
    |Ef|*\chi_{Q_{1/4+\sqrt{n}}}(x) = (|Ef|\chi_{\bigcup_{B\in\mathcal{C}_i}(B+Q_{1/4+\sqrt{n}})})*\chi_{Q_{1/4+\sqrt{n}}}(x).
\end{equation}
It follows that
\begin{align}
    \||Ef|*\chi_{Q_{1/4+\sqrt{n}}}\|_{L^{q}(\bigcup_{B\in\mathcal{C}_i}B)}^{q} & \leq \|(|Ef|\chi_{\bigcup_{B\in\mathcal{C}_i}(B+Q_{1/4+\sqrt{n}})})*\chi_{Q_{1/4+\sqrt{n}}}\|_{L^q(\bb{R}^n)}^q\\
    & \lesssim \|Ef\|_{L^{q}(\bigcup_{B\in\mathcal{C}_i}(B+Q_{1/4+\sqrt{n}}))}^{q}, \label{eq:epsilonremoval.3}
\end{align}
where the last line follows by Young's convolution inequality.
But clearly, $B(x,R)+Q_{1/4+\sqrt{n}}\subset B(x,R+2n)$ for any ball $B(x,R)\subset\bb{R}^n$. Letting $\tilde{\mathcal{C}}_i$ denote the collection of balls obtained by enlarging the radius of each ball in $\mathcal{C}_i$ by $2n$, it follows that
\begin{equation}
    \|Ef\|_{L^{q}(\bigcup_{B\in\mathcal{C}_i}(B+Q_{1/4+\sqrt{n}}))}^{q} \leq \|Ef\|_{L^{q}(\bigcup_{B\in\tilde{\mathcal{C}}_i}B)}^{q},
\end{equation}
which together with \eqref{eq:epsilonremoval.2} and \eqref{eq:epsilonremoval.3} implies
\begin{equation}
    |A_t| \lesssim \frac{1}{t^{q}}\sum_{i=1}^{m}\|Ef\|_{L^{q}(\bigcup_{B\in\tilde{\mathcal{C}}_i}B)}^{q}. \label{eq:epsilonremoval.4}
\end{equation}
Noting that each of the collections $\tilde{\mathcal{C}}_i$ is also sparse and is comprised of balls of radius $\lesssim |A_t|^{2^N}+2n$, Lemma \ref{lem:epsilonremoval.1} and our normalisation give
\begin{equation}
    \|Ef\|_{L^{q}(\bigcup_{B\in\tilde{\mathcal{C}}_i}B)}^{q} \lesssim (|A_t|^{2^N}+2n)^{\epsilon q}. \label{eq:epsilonremoval.5}
\end{equation}
Now, if $|A_t|<2n$, we get $|A_t|^{1/q_0} \lesssim 1 \lesssim 1/t$ (recalling that $0<t\leq K$), which is the conclusion \eqref{eq:weaktypereduction.2}. On the other hand, if $2n\leq |A_t|$, then \eqref{eq:epsilonremoval.5} gives
\begin{equation}
    \|Ef\|_{L^{q}(\bigcup_{B\in\tilde{\mathcal{C}}_i}B)}^{q} \lesssim |A_t|^{\epsilon q 2^N}. \label{eq:epsilonremoval.6}
\end{equation}
Combining \eqref{eq:epsilonremoval.4} and \eqref{eq:epsilonremoval.6} and recalling that $m=O(N|A_t|^{1/N})$, we find that
\begin{equation}
    |A_t| \lesssim \frac{1}{t^{q}}N|A_t|^{\epsilon q2^N+1/N}. \label{eq:epsilonremoval.7}
\end{equation}
Setting $N=\frac{\log(1/\epsilon q)}{2\log 2}$, we get $\epsilon q2^{2N}=1$, hence $\epsilon q2^N=2^{-N}\leq 1/N$. Noting that $|A_t|\geq 1$, \eqref{eq:epsilonremoval.7} then gives $|A_t| \lesssim \frac{1}{t^{q}}N|A_t|^{2/N}$ for our particular choice of $N$. Hence, if $|A_t|$ is finite, we may rearrange to conclude that $|A_t|^{1/q-2/Nq} \lesssim \frac{N^{1/q}}{t}$. Substituting our expression for $N$, this is equivalent to
\begin{equation}
    |A_t|^{\frac{1}{q}-\frac{4\log 2}{q\log(1/\epsilon q)}} \lesssim \frac{1}{t}, \label{eq:epsilonremoval.8}
\end{equation}
which is the conclusion \eqref{eq:weaktypereduction.2}. To see that $|A_t|$ is indeed finite, we note that by the asymptotics of the Fourier transform of a surface-supported measure and a density argument, $Ef(x)$ decays to $0$ as $|x|\to\infty$. It follows that the superlevel set $\eta_{|Ef|*\chi_{Q_{1/4}}}(t)$ is bounded, from which it is clear that $A_t$ is also bounded, and therefore has finite measure.
Our manipulations leading to \eqref{eq:epsilonremoval.8} are therefore justified, and we are done.
\end{proof}

Observe that as $\epsilon\to 0$, we have $\frac{1}{q}-\frac{4\log 2}{q\log(1/\epsilon q)}\to\frac{1}{q}$. When combined with Theorem \ref{thm:epsilonremoval.1}, this observation yields the following simple corollary of which Theorem \ref{thm:epsilonremoval} is a special case:

\begin{cor}
Given $2\leq p\leq q\leq\infty$, the local extension estimates $\Rstar{}{p}{q}{\,;\,\epsilon}$ for all $\epsilon>0$ imply $\Rstar{}{p}{r}{}$ for all $r>q$.
\end{cor}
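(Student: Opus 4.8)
The plan is to deduce the corollary immediately from Theorem \ref{thm:epsilonremoval.1}, using only the limiting behaviour of the exponent $q_0 = q_0(\epsilon)$ noted just above. First I would dispose of the trivial case $q=\infty$, for which there is no $r>q$ and hence nothing to prove, so assume $q<\infty$ and fix any $r$ with $q<r$; equivalently, $\tfrac{1}{r}<\tfrac{1}{q}$. The crucial point is that the quantity
$$\frac{1}{q_0(\epsilon)} = \frac{1}{q} - \frac{4\log 2}{q\log(1/\epsilon q)}$$
is well defined for all $0<\epsilon<1/q$ and converges to $\tfrac{1}{q}$ as $\epsilon\to 0^{+}$, since $\log(1/\epsilon q)\to+\infty$. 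Because $\tfrac{1}{r}$ is strictly less than this limit $\tfrac{1}{q}$, I can choose a parameter $\epsilon_0>0$ that is simultaneously small enough to meet the ``sufficiently small'' hypothesis of Theorem \ref{thm:epsilonremoval.1} and small enough that $\tfrac{1}{r}<\tfrac{1}{q_0(\epsilon_0)}$; both constraints are satisfied for all sufficiently small $\epsilon_0$, so they are compatible.

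Next I would feed the hypothesis of the corollary into this setup. By assumption the local estimate $\Rstar{}{p}{q}{\,;\,\epsilon}$ holds for every $\epsilon>0$; in particular it holds for $\epsilon=\epsilon_0$. Applying Theorem \ref{thm:epsilonremoval.1} with exponents $2\leq p\leq q\leq\infty$ and parameter $\epsilon_0$, and invoking the inequality $\tfrac{1}{r}<\tfrac{1}{q_0(\epsilon_0)}$ arranged above, we obtain precisely the global estimate $\Rstar{}{p}{r}{}$. As $r>q$ was arbitrary, this proves the corollary.

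There is no real obstacle here beyond the bookkeeping just described; all of the substantive work is contained in Theorem \ref{thm:epsilonremoval.1}. The only points worth flagging are that the implicit constant in the resulting estimate $\Rstar{}{p}{r}{}$ may depend on $r$ — which is harmless, since the chosen $\epsilon_0$, and hence everything downstream, is allowed to depend on $r$ (as well as on $p$, $q$, $n$) — and that one must take $r$ strictly greater than $q$, which is exactly the content of the statement and is forced by the strict inequality $\tfrac{1}{q_0}<\tfrac{1}{q}$.
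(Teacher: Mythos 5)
Your proposal is correct and is essentially the paper's own argument: the corollary is deduced directly from Theorem \ref{thm:epsilonremoval.1} by observing that $\frac{1}{q}-\frac{4\log 2}{q\log(1/\epsilon q)}\to\frac{1}{q}$ as $\epsilon\to 0$, so for any fixed $r>q$ one may choose $\epsilon$ small enough that $\frac{1}{r}<\frac{1}{q_0(\epsilon)}$ and apply the theorem. Your extra bookkeeping (the vacuous case $q=\infty$, compatibility with the ``sufficiently small'' hypothesis, and $r$-dependence of constants) is harmless and consistent with the paper.
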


\bibliographystyle{alpha}
\bibliography{main}

\end{document}